\newtheorem{thm}{Theorem}[section]
\newtheorem{lem}[thm]{Lemma}
\newtheorem{prop}[thm]{Proposition}
\newtheorem*{claim}{Claim}
\theoremstyle{definition}
\newtheorem{defn}[thm]{Definition}
\theoremstyle{remark}
\newtheorem{rem}[thm]{Remark}
\newtheorem*{ack}{Acknowledgements}
\def\QQ{\mathbb{Q}}
\def\CC{\mathbb{C}}
\def\AAA{\mathbb{A}}
\def\PP{\mathbb{P}}
\def\SL{\mathrm{SL}}
\title{Crepant resolution of $\AAA^4/A_4$ in characteristic $2$}
\author{Linghu Fan}
\affil{Graduate School of Mathematical Sciences, the University of Tokyo\\E-mail: linghu.fan@ipmu.jp}
\date{}
\begin{document}
\maketitle

\abstract{In this paper, we construct a crepant resolution for the quotient singularity $\AAA^4/A_4$ in characteristic $2$, where $A_4$ is the alternating group of degree $4$ with permutation action on $\AAA^4$. By computing the Euler number of the crepant resolution, we obtain a new counterexample to an analogous statement of McKay correspondence in positive characteristic.}

\section{Introduction}

Let $K$ be an algebraically closed field and $X$ be an algebraic normal variety over $K$. For a resolution $f: Y \to X$, $f$ is called crepant if $K_Y=f^*K_X$. Our interest in crepant resolutions comes from McKay correspondence. As a generalized version of McKay correspondence over $\CC$, Batyrev's theorem tells that quotient singularities with crepant resolutions have a fine property:
\begin{thm}[\cite{batyrev1999non}, Theorem 1.10.]
Let $G$ be a finite subgroup of $\SL (n,\CC)$ acting on $\CC^n$. Assume that there exists a crepant resolution $f: Y\to \CC^n/G$. Then the Euler number of $Y$ is equal to the number of conjugacy classes of $G$.
\end{thm}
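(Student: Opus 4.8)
The plan is to prove the identity through \emph{motivic integration} and the associated \emph{stringy Euler number}, an invariant that simultaneously sees the topology of a crepant resolution and the orbifold data of the quotient. Write $X = \CC^n / G$, which is Gorenstein with at worst canonical singularities precisely because $G \subset \SL(n,\CC)$. I would factor the target identity $e(Y) = \#\{\text{conjugacy classes of } G\}$ through a single intermediate quantity $e_{\mathrm{st}}(X)$, proving (a) that $e(Y)$ equals $e_{\mathrm{st}}(X)$ for \emph{every} crepant resolution, and (b) that $e_{\mathrm{st}}(X)$ counts conjugacy classes. The point of this factorization is that the birational geometry and the representation theory communicate only through $e_{\mathrm{st}}(X)$.

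For step (a), fix the crepant resolution $f : Y \to X$ with exceptional divisors $D_1,\dots,D_r$ and discrepancies defined by $K_Y = f^*K_X + \sum_i a_i D_i$. Passing to arc spaces and applying the Kontsevich change-of-variables formula, the motivic integral $\int_{\mathcal{L}(X)} 1$ can be computed on $Y$ as $\int_{\mathcal{L}(Y)} \mathbb{L}^{-\mathrm{ord}_{\sum a_i D_i}}$, a class in a completion of the Grothendieck ring of varieties whose Hodge--Deligne realization is the stringy $E$-function $E_{\mathrm{st}}(X;u,v)$; setting $u=v=1$ gives $e_{\mathrm{st}}(X)$. Crepancy means every $a_i = 0$, so the integrand is identically $1$ and the integral is just the class $[Y]$; hence $e_{\mathrm{st}}(X) = e(Y)$. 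The essential content here is the change-of-variables formula, which also shows that $e_{\mathrm{st}}(X)$ does not depend on the resolution chosen.

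For step (b), I would compute $e_{\mathrm{st}}(X)$ on the quotient side via the orbifold (inertia-stack) description of the arcs of the quotient. These arcs decompose into \emph{twisted sectors} indexed by conjugacy classes $[g]$, the sector of $[g]$ being controlled by the fixed locus $(\CC^n)^g$ weighted by the \emph{age} $\mathrm{age}(g) = \sum_j \theta_j$, where the eigenvalues of $g$ are $e^{2\pi i \theta_j}$ with $\theta_j \in [0,1)$. The condition $G \subset \SL(n,\CC)$ forces each $\mathrm{age}(g) \in \ZZ_{\ge 0}$, so every sector carries only an integral Tate twist; and $(\CC^n)^g$ is a linear subspace, hence contractible. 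Consequently, when we specialize at $u=v=1$ the twists disappear and each sector contributes the Euler number of the contractible quotient $(\CC^n)^g / C_G(g)$, namely $1$:
\[
e_{\mathrm{st}}(X) \;=\; \sum_{[g]} e\!\left( (\CC^n)^g / C_G(g) \right) \;=\; \#\{\text{conjugacy classes of } G\}.
\]
Equivalently this is the orbifold Euler number $\tfrac{1}{\abs{G}} \#\{(g,h)\in G\times G : gh = hg\}$, and the count of commuting pairs equals $\abs{G}$ times the number of conjugacy classes.

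The main obstacle, and the technical heart of the argument, is the foundational layer beneath both steps: constructing the motivic (or $p$-adic) measure on arc spaces rigorously and proving the change-of-variables formula that makes $e_{\mathrm{st}}(X)$ well defined and resolution-independent. A secondary difficulty is the identification in step (b) of the arc space of the quotient with the sum over twisted sectors — the \emph{motivic McKay correspondence} — where one must track the age grading carefully and use the $\SL$ hypothesis to guarantee that the fractional shifts organize into integers. In Batyrev's original treatment these analytic points are handled by $p$-adic integration together with the comparison between $p$-adic point counts and Euler numbers; either route, the delicate step is the measure theory on infinite-dimensional arc spaces, not the final combinatorial count.
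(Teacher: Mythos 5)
The paper does not actually prove this statement: it is quoted from Batyrev (\cite{batyrev1999non}, Theorem 1.10) purely as background motivation for the positive-characteristic counterexamples, so there is no internal proof to compare against. Judged on its own, your outline is a faithful reconstruction of the standard proof, and your factorization $e(Y)=e_{\mathrm{st}}(\CC^n/G)=\#\{\text{conjugacy classes}\}$ is exactly the right architecture. Two points of precision. First, attribution of the load-bearing steps: Batyrev's own proof of Theorem 1.10 runs through $p$-adic integration --- he spreads the situation out over a number ring, computes the $p$-adic measure of $X(\ZZ_p)$ once via a crepant resolution and once via the quotient structure (where the sum over conjugacy classes weighted by $q^{n-\mathrm{age}(g)}$ appears through a ramification-theoretic analysis), and then converts point counts to Euler numbers via the Weil conjectures. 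The purely motivic route you describe is sound but rests on two theorems you must cite rather than prove: the arc-space change-of-variables formula (Kontsevich, Denef--Loeser) for your step (a), and for your step (b) the identity $E_{\mathrm{st}}(\CC^n/G;u,v)=\sum_{[g]}(uv)^{\mathrm{age}(g)}E\bigl((\CC^n)^g/C_G(g);u,v\bigr)$, which Batyrev conjectured and Denef--Loeser proved; your phrase ``arcs of the quotient decompose into twisted sectors'' is precisely that theorem, not a routine observation. Second, the small verifications you elide do all go through: $(\CC^n)^g/C_G(g)$ is a quotient of a linear subspace by a linear action, hence contractible with Euler number $1$; the $\SL$ hypothesis makes every age a nonnegative integer, so each sector specializes cleanly at $u=v=1$; and crepancy ($a_i=0$ for all $i$) makes the integrand trivial, giving both $e_{\mathrm{st}}(X)=e(Y)$ and resolution-independence. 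With the two foundational inputs properly invoked, your proposal is a correct proof sketch --- and, notably, it is the same circle of ideas (motivic classes of crepant resolutions, sectors counted by $\mathbb{L}^{\mathrm{age}}$) whose failure in the modular case is the subject of the paper's Remark 3.2, where $[R]=\mathbb{L}^4+6\mathbb{L}^3+3\mathbb{L}^2$ no longer matches the conjugacy-class/age count.
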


In dimension $2$, minimal resolutions of quotient singularities $\CC^2/G$ are crepant, and Batyrev's theorem becomes a corollary of classical McKay correspondence. In dimension $3$, crepant resolution for any possible $\CC^3/G$ exists, according to constructions by Markushevich \cite{markushevich1987description}, \cite{markushevich1997resolution}, Roan \cite{roan1989generalization}, \cite{roan1994c1}, \cite{roan1996minimal} and Ito \cite{ito1994crepant}, \cite{ito1995gorenstein}. For higher dimensions, there are examples of quotient singularities with no crepant resolutions. 

We consider the analogous statement of Batyrev's theorem in positive characteristic, where the field is $K$, an algebraically closed field of characteristic $p>0$, instead of $\CC$. To determine Euler number in positive characteristic, we use the following definition.
\begin{defn}
Fix a prime $l \neq p$. Denote the $l$-adic cohomology with compact support by $H^{i}_c(-,\QQ_l)$. Let $X$ be a smooth algebraic variety over $K$. Then we define Euler number of $X$ to be
$$\chi(X)=\sum_i(-1)^{i}\mathrm{dim}_{\QQ_l}H^{i}_c(X,\QQ_l).$$
\end{defn}
Note that this definition is independent of choice of $l$, and it coincides with the definition of topological Euler number in characteristic $0$.

In positive characteristic, for a finite subgroup $G \subseteq \SL(n,K)$, there are two cases: non-modular case, when $p$ does not divide the order of $G$; and modular case, when $p$ divides the order of $G$. Roughly speaking, non-modular cases are easier to be considered, since the associated quotient singularities can be lifted to $\CC$. In particular, Batyrev's theorem holds for non-modular quotient singularities in positive characteristic.  

For modular cases, few examples of crepant resolutions are known. Chen, Du and Gao \cite{chen2020modular} gave a crepant resolution as a counterexample to Batyrev's theorem in characteristic $2$. In their example, the group $G\cong C_6 \subseteq \SL(2,K)$ has a reflection. Yasuda \cite{yasuda2014cyclic} showed that Batyrev's theorem holds for the cases when the group $G$ is $p$-cyclic with no reflections, and gave two examples with crepant resolutions: $\AAA^4_K/C_2$ ($p=2$) and $\AAA^3_K/C_3$ ($p=3$). For groups with more complicated structure, even if we assume that the group has no reflections, there is still a counterexample given by Yamamoto \cite{yamamoto2021crepant}: in characteristic $3$, the quotient singularity $\AAA^3_K/S_3$ has a crepant resolution with Euler number $6$, while the symmetric group $S_3$ has $3$ conjugacy classes. 

In the known examples above, the Sylow $p$-subgroups are $p$-cyclic. In this paper, we construct a crepant resolution of quotient singularity in characteristic $2$, where the group $G$ has no reflections, and has a non-cyclic Sylow 2-subgroup of order $2^2$:

\begin{thm}[Main result]
Let $K$ be an algebraically closed field of characteristic $2$, and $A_4$ be the alternating group with permutation action on $\AAA^4_K$. Denote the quotient singularity $\AAA^4_K/A_4$ by $X$. Then $X$ has a crepant resolution $\widetilde{X}$ with Euler number $\chi (\widetilde{X})=10$.
\end{thm}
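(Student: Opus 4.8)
The plan is to construct the crepant resolution explicitly and then compute its Euler number by a stratification argument. First I would set up coordinates: write $A_4$ acting on $\AAA^4_K = \operatorname{Spec} K[x_1,x_2,x_3,x_4]$ by permuting the coordinates, and identify the Sylow $2$-subgroup as the Klein four-group $V = \{e, (12)(34), (13)(24), (14)(23)\}$, with $A_4 = V \rtimes C_3$ where $C_3$ is generated by a $3$-cycle. Since $p = 2$ does not divide $3$, the $C_3$-action is non-modular and the genuinely hard part is the modular $V$-action. I would first verify that $A_4$ has no reflections on $\AAA^4_K$ (checking that no non-identity element fixes a hyperplane), so the quotient is a genuine singularity for which crepancy is the right condition. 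The natural strategy is to resolve in stages: first understand $\AAA^4_K/V$, then take the residual $C_3$-quotient and resolve.

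The core of the construction is to produce an explicit resolution $\widetilde{X} \to X$ and check crepancy via the discrepancy condition $K_{\widetilde{X}} = f^* K_X$. I would look for invariant coordinates (elementary symmetric-type functions adapted to characteristic $2$, where $V$-invariants and the Artin--Schreier features of the modular action become visible) to present $X$ as an explicit singular affine variety, then resolve its singular locus by a sequence of blow-ups, tracking the canonical divisor at each stage. The key technical tool is that a blow-up along a smooth center $Z$ of codimension $c$ contributes discrepancy $c-1$ along the exceptional divisor; to stay crepant, each center must be chosen so the log-pull-back of $K_X$ exactly cancels these contributions, which in the modular setting is subtle because the standard reflection/age heuristics over $\CC$ fail. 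I expect the main obstacle to be precisely this: identifying centers that are simultaneously smooth, $C_3$-equivariant (so the resolution descends through the residual quotient), and crepant, since in characteristic $2$ the wild ramification of $V$ destroys the clean numerical bookkeeping available in the non-modular or characteristic-zero cases.

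Once a crepant resolution $\widetilde{X}$ is in hand, I would compute $\chi(\widetilde{X})$ using the additivity of the $l$-adic compactly-supported Euler characteristic over a locally closed stratification: $\chi(\widetilde{X}) = \sum_S \chi(S)$ summed over the strata $S$, where the strata come from the exceptional locus together with the smooth open part mapping isomorphically to the smooth locus of $X$. The smooth locus of $X$ is $(\AAA^4_K \setminus \{\text{fixed loci}\})/A_4$, whose Euler number I would compute from $\chi(\AAA^4_K) = 1$ via the quotient/fixed-point contributions; each exceptional stratum is a fibration (often by affine spaces or projective spaces) over its center, and I would multiply Euler numbers using the fact that $\chi(\AAA^n) = 1$ and $\chi(\PP^n) = n+1$, together with $\chi$ multiplicativity for Zariski-locally trivial fibrations. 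The target $\chi(\widetilde{X}) = 10$ is the payoff and the consistency check on the construction.

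Finally I would contrast this with the group theory: $A_4$ has exactly $4$ conjugacy classes, so the computed Euler number $10 \neq 4$ exhibits the promised counterexample to the positive-characteristic analogue of Batyrev's theorem for a group with non-cyclic Sylow $2$-subgroup and no reflections. The delicate points I anticipate defending in the write-up are (i) that the staged resolution is genuinely crepant rather than merely a resolution, and (ii) that the Euler-number bookkeeping correctly accounts for every exceptional stratum with the right fibration type; an error in either the crepancy verification or the stratification would be the most likely source of a miscount, so I would cross-check the Euler number by an independent localization or motivic-integration-style computation if available in the modular setting.
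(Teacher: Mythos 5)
Your opening strategy---decompose $A_4 = V \rtimes C_3$ with $V$ the Klein four-group, resolve $\AAA^4_K/V$ first with $C_3$-equivariant centers, then pass to the residual quotient---is precisely the approach the paper singles out as \emph{failing} in characteristic $2$: its final remark notes that, unlike in characteristic $0$, one cannot obtain a crepant resolution of $\AAA^4/V$ this way, and that $\AAA^4/V$ is in a sense \emph{not easier} than $\AAA^4/A_4$. The paper's actual route avoids the intermediate quotient entirely. It presents $X$ directly as a hypersurface $V(f) \subseteq \AAA^5$, and this step requires concrete characteristic-$2$ invariant theory that your sketch leaves unspecified: $K[x_1,\dots,x_4]^{A_4} = K[s_1,s_2,s_3,s_4,\Delta_4]$ where $\Delta_4$ must be taken as the orbit sum $\mathcal{O}_{A_4}(x_1^3x_2^2x_3)$ rather than the Vandermonde polynomial (which degenerates mod $2$), with the hypersurface presentation verified by comparing Hilbert series against Molien's formula. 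The singular locus is then computed as the union of the images of the order-$2$ and order-$3$ fixed planes, and the resolution is a specific chain of four blow-ups inside this hypersurface, with no equivariance constraint on the centers.

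Your crepancy bookkeeping also has a gap: the rule ``codimension $c$ gives discrepancy $c-1$'' applies to the smooth ambient space, not to the hypersurface, and by itself it cannot certify crepancy of the induced map on strict transforms. The criterion the paper actually uses is: if the center is smooth of codimension $3$ in $\AAA^n$ and the hypersurface has multiplicity $2$ along it, then $K_U = f^*K_{\AAA^n} + 2E$ and $f^*X = \widetilde{X} + 2E$ cancel under adjunction, making the blow-up crepant. Every center in the construction must be checked against this multiplicity-$2$, codimension-$3$ condition. Finally, your Euler-number plan (additivity over strata plus multiplicativity for fibrations) is the right shape but would miscount without two characteristic-$2$ inputs you do not anticipate: a classification of conics in characteristic $2$ (exceptional fibers are $\PP^1$ as a double line or smooth conic, or $\PP^1 \vee \PP^1$, and the degeneration pattern differs from characteristic $0$), and the fact that some exceptional divisors become $\PP^1$-bundles only after a Frobenius base change, which is harmless for $\chi$ because Frobenius is a universal homeomorphism. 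The actual tally is $\chi(E_4) + \chi(E_3\setminus\mathrm{Sing}\,W) + \chi(E_2\setminus\mathrm{Sing}\,V) + \chi(E_1\setminus\mathrm{Sing}\,U) + \chi(M\setminus\mathrm{Sing}\,M) = 3 + 4 + 3 + 0 + 0 = 10$; note in particular that the open stratum and the first exceptional stratum contribute $0$, not the naive fixed-point counts your plan suggests. As written, the proposal is a plausible outline whose lead idea runs into a known dead end and whose remaining steps omit the constructions that constitute the proof.
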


Since the alternating group $A_4$ has $4$ conjugacy classes, our result is also a new counterexample to analogous statement of Batyrev's theorem in positive characteristic. 

\begin{ack}
This paper contains some results in the author's master thesis. The author is grateful to his advisor, Professor Yukari Ito, for her useful suggestions. The author would also like to thank Professor Takehiko Yasuda for his valuable advice. The author is supported by WINGS-FMSP at the University of Tokyo.
\end{ack}

\section{Preliminaries}
To give a proof of the main result, we firstly list propositions to study the given quotient singularity $\AAA^4/A_4$ and crepant morphisms, especially in characteristic $2$.   
\begin{prop}\label{A4equation}
Let $K$ be an algebraically closed field of characteristic 2. Under the permutation action of $A_4$,
\begin{align*}
   \mathbb{A}^4_K /A_4 \cong V(&E^2+(A^2D+ABC+C^2)E+A^4D^2+A^3C^3+A^2B^3D\\&+B^3C^2+C^4) . 
\end{align*}

\end{prop}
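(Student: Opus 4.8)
\section*{Proof proposal}

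The plan is to compute the invariant ring $K[x_1,x_2,x_3,x_4]^{A_4}$ explicitly and to recognize it as the coordinate ring of the displayed hypersurface. I start from the symmetric invariants: by the fundamental theorem of symmetric polynomials, valid over any field, $K[x_1,\dots,x_4]^{S_4}=K[e_1,e_2,e_3,e_4]$ where the $e_i$ are the elementary symmetric polynomials, so $K(x)^{S_4}=K(e_1,\dots,e_4)$. Since $[S_4:A_4]=2$, the extension $K(x)^{A_4}/K(x)^{S_4}$ has degree $2$, and it suffices to produce one $A_4$-invariant $\theta\notin K(x)^{S_4}$ together with its quadratic relation over $K[e_1,\dots,e_4]$; under the identification $(A,B,C,D,E)=(e_1,e_2,e_3,e_4,\theta)$ this should give the claim, and the degrees $(1,2,3,4,6)$ are consistent with every term of the displayed equation being isobaric of weight $12$. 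The characteristic-$2$ subtlety enters at once: the classical generator $\prod_{i<j}(x_i-x_j)$, which separates $A_n$ from $S_n$ when the characteristic is not $2$, becomes symmetric once $2=0$ and already lies in $K[e_1,\dots,e_4]$, so a genuinely different invariant is needed.

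To build $\theta$ I would use the three partitions into pairs, setting $p_1=x_1x_2+x_3x_4$, $p_2=x_1x_3+x_2x_4$, $p_3=x_1x_4+x_2x_3$. These are permuted through the surjection $S_4\to S_3$ with kernel the Klein four-group $V$; since $V$ fixes each $p_i$ and $A_4/V\cong C_3$ acts by cyclic permutation, the cyclically symmetric cubic $\theta=p_1^2p_2+p_2^2p_3+p_3^2p_1$ is $A_4$-invariant, whereas a transposition carries it to $\theta'=p_1p_2^2+p_2p_3^2+p_3p_1^2\neq\theta$, so $\theta\notin K[x]^{S_4}$. Then $s:=\theta+\theta'$ and $q:=\theta\theta'$ are symmetric in $p_1,p_2,p_3$, hence $S_4$-invariant, and $\theta$ satisfies $\theta^2+s\theta+q=0$ (using $-1=1$).

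The core computation is to rewrite $s$ and $q$ in terms of $e_1,\dots,e_4$. I would first express the elementary symmetric functions $\sigma_1,\sigma_2,\sigma_3$ of $p_1,p_2,p_3$ through the resolvent cubic of $\prod_i(T-x_i)$, which in characteristic $2$ gives $\sigma_1=e_2$, $\sigma_2=e_1e_3$ and $\sigma_3=e_1^2e_4+e_3^2$ (all terms with coefficient $4$ drop out). Using $s=\sigma_1\sigma_2+\sigma_3$ and, in characteristic $2$, $q=\sigma_2^3+\sigma_1^3\sigma_3+\sigma_3^2$, substitution should yield $s=e_1^2e_4+e_1e_2e_3+e_3^2$ together with the degree-$12$ expression for $q$, matching $A^2D+ABC+C^2$ and $A^4D^2+A^3C^3+A^2B^3D+B^3C^2+C^4$ respectively. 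Since $\theta\notin K(e_1,\dots,e_4)$, the polynomial $g:=E^2+sE+q$ is irreducible over $K(e_1,\dots,e_4)$ and hence in $K[A,B,C,D,E]$, so $R:=K[e_1,\dots,e_4][\theta]\cong K[A,B,C,D,E]/(g)$ is a $4$-dimensional domain.

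It remains to upgrade this identity of fraction fields to an isomorphism of rings, that is, to prove $S:=K[x]^{A_4}=R$. Here $R\subseteq S$, both have fraction field $K(e_1,\dots,e_4)(\theta)=K(x)^{A_4}$; moreover each $x_i$ is integral over $K[e_1,\dots,e_4]$, being a root of $\prod_i(T-x_i)$, so $S$ is integral over $R$, and $S$ is normal as the ring of invariants of a normal domain under a finite group. Hence $S=R$ as soon as $R$ is normal, and since $R$ is a hypersurface it is a complete intersection, so it automatically satisfies Serre's condition $S2$; the one remaining point is $R1$, which I would verify by checking that the singular locus of $V(g)$, cut out by $g=0$ together with $\partial g/\partial E=s$ (in characteristic $2$) and the partials in $A,B,C,D$, has codimension at least $2$. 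I expect this normality check to be the main obstacle. In the modular situation $p=2$ the usual shortcuts are unavailable, the Reynolds operator does not exist, Molien's formula does not apply, and Hochster--Eagon Cohen--Macaulayness is not guaranteed, so the equality of $R$ and $S$ cannot be read off from a Hilbert-series comparison and must instead rest on the explicit normality computation above.
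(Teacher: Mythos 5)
Your proposal is correct in outline and takes a genuinely different route from the paper. Your identities check out: the resolvent cubic of the pair-partition quadratics $p_1,p_2,p_3$ has, in characteristic $2$, $\sigma_1=e_2$, $\sigma_2=e_1e_3$, $\sigma_3=e_1^2e_4+e_3^2$; reducing the integral identities $\theta+\theta'=\sigma_1\sigma_2-3\sigma_3$ and $\theta\theta'=\sigma_2^3+\sigma_1^3\sigma_3-6\sigma_1\sigma_2\sigma_3+9\sigma_3^2$ mod $2$ gives exactly $s=A^2D+ABC+C^2$ and $q=A^4D^2+A^3C^3+A^2B^3D+B^3C^2+C^4$, so your quadratic relation is the displayed $f$ (your $\theta$ need not equal the paper's orbit sum $\Delta_4=\mathcal{O}_{A_4}(x_1^3x_2^2x_3)$, but that is irrelevant to the presentation). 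The paper instead cites the generation result $K[x_1,\dots,x_4]^{A_4}=K[s_1,\dots,s_4,\Delta_4]$ from Campbell--Wehlau, exhibits $f$ in the kernel of the induced graded surjection $\phi:K[A,B,C,D,E]\to K[x]^{A_4}$, and concludes by comparing Hilbert series degree by degree. Your construction via $\theta=p_1^2p_2+p_2^2p_3+p_3^2p_1$, Gauss's lemma, and Serre's criterion ($S2$ from the hypersurface property, $R1$ from the Jacobian computation) is self-contained: it does not presuppose the generation theorem, and in fact reproves it in characteristic $2$ as a byproduct of the equality $K[e_1,\dots,e_4][\theta]=K[x]^{A_4}$. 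What the paper's route buys is avoiding any singular-locus computation at this stage; what yours buys is independence from the invariant-theory literature, at the cost of one nontrivial verification you defer: that $\mathrm{Sing}\,V(f)$ has codimension $\geq 2$. That deferred step is true and routine --- the paper itself later computes $\mathrm{Sing}(M)=P_1\cup P_2$, a union of two $2$-planes in the $4$-fold $M$, which is exactly the $R1$ condition you need --- but as written your proof is incomplete without it.

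One assertion in your closing paragraph is wrong and worth correcting: the Hilbert-series shortcut is \emph{not} unavailable here. Although the Reynolds operator fails and Molien's formula does not hold for general modular representations, for a \emph{permutation} representation the invariant ring has a $K$-basis of orbit sums of monomials in every characteristic, so $\mathcal{H}(K[x]^{A_4},\lambda)$ is characteristic-independent and may be computed by Molien over $\CC$; this is precisely the observation the paper exploits. So the equality of $R$ and $S$ \emph{can} be read off from a Hilbert-series comparison once surjectivity of $\phi$ is known --- the paper's only external input being the Campbell--Wehlau generation statement, which your normality argument replaces.
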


\begin{proof}

It is known that in general, under the permutation action of $A_4$, the invariant ring $K[x_1,x_2,x_3,x_4]^{A_4}=K[s_1,s_2,s_3,s_4,\Delta_4]$, where $s_i$ are the elementary symmetric polynomials and $\Delta_4$ is an $A_4$-invariant polynomial of degree 6 which is not symmetric. Over fields of characteristic different from 2, $\Delta_4$ is always taken as the Vandermonde polynomial. However, in characteristic 2, one should take $\Delta_4=\mathcal{O}_{A_4}(x_1^3x_2^2x_3)$ instead, where $\mathcal{O}_G$ denotes the orbit sum under the group action (see \cite{campbell2011modular}, 4.4). 

In characteristic 2, computation shows that $\Delta_4^2+(s_1^2s_4+s_1s_2s_3+s_3^2)\Delta_4$ is a symmetric polynomial of degree 12 with elementary representation $s_1^4s_4^2+s_1^3s_3^3+s_1^2s_2^3s_4+s_2^3s_3^2+s_3^4$. 
Let $R=K[A,B,C,D,E]$ be a graded polynomial ring such that $A,B,C,D,E$ are of degree $1,2,3,4,6$ respectively. Consider the ring homomorphism $\phi:R \to K[x_1,x_2,x_3,x_4]^{A_4}$ determined by $\phi(A)=s_1, \phi(B)=s_2, \phi(C)=s_3, \phi(D)=s_4, \phi(E)=\Delta_4$. Then $\phi$ is a surjective homomorphism between two graded rings, and $f:=E^2+(A^2D+ABC+C^2)E+A^4D^2+A^3C^3+A^2B^3D+B^3C^2+C^4 \in \mathrm{Ker}\phi$. 

We want to show that $R/(f)=R/\mathrm{Ker}\phi \cong K[x_1,x_2,x_3,x_4]^{A_4}$. For a graded ring $S$, we consider its Hilbert series $\mathcal{H}(S,\lambda) = \sum_{d \geq 0}\lambda ^{d}\dim_{K}S_{d}$. Since $R$ is generated by elements of degree $1,2,3,4,6$ and $f$ is of degree $12$, we have
\begin{align*}
 \mathcal{H}(R/(f),\lambda)
=\frac{1-\lambda^{12}}{(1-\lambda)(1-\lambda^2)(1-\lambda^3)(1-\lambda^4)(1-\lambda^6)}.   
\end{align*}
On the other hand, since $A_4$ has a permutation action on $\AAA^4$, the Hilbert series of invariant ring does not change by characteristic of the field, and it is possible to compute the Hilbert series by Molien's theorem over $\CC$ (see \cite{campbell2011modular}, 3.7 and 4.5). Therefore,
\begin{align*}
&\mathcal{H}(K[x_1,x_2,x_3,x_4]^{A_4},\lambda)\\
=&\frac{1}{12}(\frac{1}{(1-\lambda)^4}+\frac{3}{(1-\lambda)^2(1+\lambda)^2}+\frac{8}{(1-\lambda)(1-\lambda^3)})\\
=&\frac{1+\lambda^6}{(1-\lambda)(1-\lambda^2)(1-\lambda^3)(1-\lambda^4)}\\
=&\frac{1-\lambda^{12}}{(1-\lambda)(1-\lambda^2)(1-\lambda^3)(1-\lambda^4)(1-\lambda^6)}.
\end{align*}
Since $R/(f)$ and $K[x_1,x_2,x_3,x_4]^{A_4}$ share the same Hilbert series, the induced surjective homomorphism $\widetilde{\phi}:R/(f) \to K[x_1,x_2,x_3,x_4]^{A_4}$, which is surjective on each degree between two $K$-linear spaces with the same dimension, should also be isomorphic on each degree. Therefore $R/(f) \cong K[x_1,x_2,x_3,x_4]^{A_4}$. 
\end{proof}
By Proposition \ref{A4equation}, the quotient singularity $\AAA^4/A_4$ is isomorphic to a hypersurface with a computable defining equation. Using this equation, we will show that crepant resolution of the singularity can be obtained by composition of a series of blow-ups.

\begin{lem}\label{crepantcondition}
Let $X$ be a hypersurface in $\AAA^n$. Consider the blow-up of $X$ along $C \subseteq X$. If $C$ is smooth and irreducible, with codimension $3$ in $\AAA^n$, and $X$ has multiplicity $2$ along $C$, then the blow-up morphism $f:\widetilde{X}\to X$ is crepant.
\end{lem}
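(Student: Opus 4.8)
The plan is to reduce everything to a discrepancy computation on the smooth ambient space $\AAA^n$ and then descend to $X$ by applying adjunction twice. Write $W=\AAA^n$ and let $\pi\colon \widetilde{W}\to W$ be the blow-up of $W$ along $C$, with exceptional divisor $E$. Since $C$ is smooth, irreducible and of codimension $3$ in the smooth variety $W$, the space $\widetilde{W}$ is smooth and the standard discrepancy formula for blowing up a smooth center gives
$$K_{\widetilde{W}}=\pi^{*}K_{W}+(3-1)E=\pi^{*}K_{W}+2E.$$
The first thing I would verify carefully is that the blow-up $f\colon\widetilde{X}\to X$ in the statement is exactly the strict transform of $X$ inside $\widetilde{W}$, i.e. that $\widetilde{X}=\mathrm{Bl}_{C}X$ is the closure of $\pi^{-1}(X\setminus C)$ and that $f=\pi|_{\widetilde{X}}$; this is the standard fact that blowing up commutes with passing to a subscheme cut out by the restricted ideal.

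Next I would compute the total transform of the Cartier divisor $X\subseteq W$. The hypothesis that $X$ has multiplicity $2$ along $C$ means precisely that the order of vanishing of a local equation of $X$ at the generic point of $C$ is $2$, so
$$\pi^{*}X=\widetilde{X}+2E$$
as divisors on $\widetilde{W}$. Combining this with the discrepancy formula and applying adjunction on the smooth variety $\widetilde{W}$, I obtain
$$K_{\widetilde{X}}=(K_{\widetilde{W}}+\widetilde{X})|_{\widetilde{X}}=(\pi^{*}K_{W}+2E+\pi^{*}X-2E)|_{\widetilde{X}}=\pi^{*}(K_{W}+X)|_{\widetilde{X}}.$$
The two copies of $2E$ cancel, which is the arithmetic heart of the statement: the discrepancy $c-1=2$ produced by the codimension-$3$ smooth center is exactly matched by the multiplicity $2$ of $X$ along $C$. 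Since $\pi|_{\widetilde{X}}=f$ and, by adjunction on $W$, $K_{X}=(K_{W}+X)|_{X}$, the right-hand side equals $f^{*}\big((K_{W}+X)|_{X}\big)=f^{*}K_{X}$, giving $K_{\widetilde{X}}=f^{*}K_{X}$.

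The step I expect to require the most care is the foundational one of making the statement $K_{\widetilde{X}}=f^{*}K_{X}$ meaningful and of justifying adjunction in this positive-characteristic, possibly-singular setting. Both $X\subseteq W$ and $\widetilde{X}\subseteq\widetilde{W}$ are Cartier divisors in smooth varieties, hence Gorenstein, so their canonical (dualizing) sheaves $\omega_{X}$ and $\omega_{\widetilde{X}}$ are line bundles and the adjunction isomorphisms $\omega_{\widetilde{X}}\cong(\omega_{\widetilde{W}}\otimes\mathcal{O}(\widetilde{X}))|_{\widetilde{X}}$ and $\omega_{X}\cong(\omega_{W}\otimes\mathcal{O}(X))|_{X}$ hold at the level of dualizing sheaves independently of the characteristic; phrasing the whole computation in terms of $\omega$ rather than divisor classes avoids any reliance on normality of $\widetilde{X}$. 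The only genuinely geometric input, the identity $\pi^{*}X=\widetilde{X}+2E$, is then just the translation of the multiplicity-$2$ hypothesis, and the remaining manipulations are the formal pullback and restriction identities above.
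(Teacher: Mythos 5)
Your proof is correct and follows essentially the same route as the paper's: blow up the ambient $\AAA^n$, combine the discrepancy formula $K_{\widetilde{W}}=\pi^*K_W+2E$ with the total-transform identity $\pi^*X=\widetilde{X}+2E$, and let the two copies of $2E$ cancel under adjunction to get $K_{\widetilde{X}}=f^*K_X$. The additional care you take in identifying $\widetilde{X}$ with the strict transform and in justifying adjunction via dualizing sheaves of Gorenstein (Cartier-in-smooth) hypersurfaces is a sound elaboration of steps the paper leaves implicit.
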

\begin{proof}
    Denote the blow-up of $\AAA^n$ along $C$ again by $f:U\to \AAA^n$. Then by abusing notations of exceptional divisors, we have
    \begin{align*}
        K_{U}&=f^*K_{\AAA^n}+2E,\\
        f^*X&=\widetilde{X}+2E.
    \end{align*}
    Taking them together, and applying the adjunction formula, we obtain
    \begin{align*}
        K_{\widetilde{X}}=(K_{U}+\widetilde{X})|_{\widetilde{X}}=(f^*K_{\AAA^n}+f^*X)|_{\widetilde{X}}=f^*(K_{\AAA^n}+X)|_{\widetilde{X}}=f^*K_X.
    \end{align*}
    Therefore, $f$ is crepant.
\end{proof}
Lemma \ref{crepantcondition} is a sufficient condition for a blow-up morphism to be crepant. We will see that all the blow-ups in the paper meet this condition.

\begin{prop}\label{conics}
Let $C=(h=0)$ be a conic in $\AAA^2_K$, where $K$ is an algebraically closed field of characteristic $2$, and $h(x,y)=ax^2+bxy+cy^2+dx+ey+f$. Here $a,b,c$ are not all $0$. Then $C$ is determined as follows.
\begin{enumerate}
    \item $C$ is a double line, if $b=d=e=0$.
    \item $C$ is two intersecting different lines, if one of the following holds:
    \begin{itemize}
        \item $b \neq 0, f+\frac{1}{b}(de+\frac{ae^2}{b}+\frac{cd^2}{b})=0$; 
        \item $a,d \neq 0, b=e+d\sqrt{\frac{c}{a}}=0$;
        \item $c,e \neq 0, b= d+e\sqrt{\frac{a}{c}}=0$.
    \end{itemize}    
    \item $C$ is a non-degenerate conic, otherwise.
\end{enumerate}
\end{prop}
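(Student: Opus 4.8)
The plan is to reduce the whole classification to one linear-algebra computation by passing to the projective closure and exploiting the peculiar shape of the gradient in characteristic $2$. Homogenize $h$ to $H=aX^2+bXY+cY^2+dXZ+eYZ+fZ^2$ on $\PP^2$, so that $C$ is a double line, a pair of distinct lines, or smooth exactly according to its projective closure $\overline{C}=V(H)$. Because $2=0$, the diagonal terms drop out of the partial derivatives, and one computes
\[
\begin{pmatrix} H_X \\ H_Y \\ H_Z \end{pmatrix} = M \begin{pmatrix} X \\ Y \\ Z \end{pmatrix}, \qquad M = \begin{pmatrix} 0 & b & d \\ b & 0 & e \\ d & e & 0 \end{pmatrix}.
\]
Thus the gradient is \emph{linear} in the coordinates and vanishes exactly on $\ker M$, and the singular points of $\overline{C}$ are precisely the points of $\ker M$ that lie on $\overline{C}$.

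First I would record two facts. One checks $\det M = 2bde = 0$ identically, so $\operatorname{rank} M \in \{0,2\}$: the top-left $2\times2$ minor equals $b^2$, so $\operatorname{rank}M = 2$ whenever $b\neq0$, while if $b=0$ but $(d,e)\neq(0,0)$ a direct inspection again gives rank $2$; rank $1$ never occurs, and rank $0$ happens exactly when $b=d=e=0$. Second, I would invoke the standard fact that a conic with a singular point is reducible: translating the singular point to the origin kills the constant and linear terms, leaving a homogeneous quadratic $ax^2+bxy+cy^2$ which factors over the algebraically closed field $K$ into two lines. A double line $(\ell)^2$ has identically vanishing gradient (it equals $2\ell\,\nabla\ell=0$), hence corresponds to $\operatorname{rank}M=0$, whereas a pair of distinct lines has a single singular point, the isolated zero of the rank-$2$ gradient.

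These observations give the trichotomy directly. If $b=d=e=0$, then $\operatorname{rank}M=0$, the gradient vanishes everywhere, and $h=ax^2+cy^2+f=(\sqrt a\,x+\sqrt c\,y+\sqrt f)^2$ by the Frobenius, a genuine double line since $a,c$ are not both zero — this is case (1). Otherwise $\operatorname{rank}M=2$ and there is a unique candidate singular point $P$ spanning $\ker M$; the conic is a pair of distinct lines meeting at $P$ when $P\in\overline{C}$, i.e. $H(P)=0$, and is non-degenerate otherwise. It then remains to compute $P$ and $H(P)$ in the two subcases: when $b\neq0$ one finds the affine point $P=(e:d:b)$ with $H(P)=f+\tfrac1b\bigl(de+\tfrac{ae^2}{b}+\tfrac{cd^2}{b}\bigr)$, which is exactly the first bullet of case (2); when $b=0$ and $(d,e)\neq(0,0)$ the kernel is the point at infinity $P=(e:d:0)$, with $H(P)=ae^2+cd^2$ (so here the two lines are parallel, meeting only at infinity).

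The only genuine work — fiddly rather than deep — is matching the single intrinsic condition $ae^2+cd^2=0$ against the two radical-expression bullets of case (2). I would extract square roots, which is legitimate since $K$ is algebraically closed and the Frobenius is therefore bijective: for $a\neq0$ the relation reads $e=\sqrt{c/a}\,d$, i.e. $e+d\sqrt{c/a}=0$, which forces $d\neq0$; for $c\neq0$ it reads symmetrically $d+e\sqrt{a/c}=0$ with $e\neq0$; and the boundary situations $a=0$ or $c=0$ collapse into these, the vanishing radical simply annihilating one term. Verifying that these subcases reproduce precisely the stated bullets, and that the three cases are mutually exclusive and exhaustive, completes the argument.
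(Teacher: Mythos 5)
Your proof is correct, but it takes a genuinely different route from the paper's. The paper argues by explicit changes of variables adapted to characteristic $2$: when $b\neq 0$ it factors the quadratic part and rewrites $h=bXY+F$ with $F=f+\frac1b\bigl(de+\frac{ae^2}{b}+\frac{cd^2}{b}\bigr)$, and when $b=0$ it uses the Frobenius substitution $X=x+\sqrt{c/a}\,y$ to reach the normal forms $aX^2+dX+Ey+f$ and $aX_1^2+EY$ --- an algorithmic completing-of-squares that exhibits the lines explicitly in each degenerate case. You instead pass to the projective closure and exploit the fact that in characteristic $2$ the gradient of $H$ is \emph{linear}, given by the alternating matrix $M$; since an alternating $3\times 3$ matrix has even rank, the classification reduces to $\operatorname{rank}M\in\{0,2\}$ together with whether the unique kernel point $P$ lies on $V(H)$. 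You handle the characteristic-$2$ subtlety correctly: the Euler relation degenerates to $XH_X+YH_Y+ZH_Z=2H=0$, so vanishing of the partials does not force $H(P)=0$, and the membership condition $H(P)=0$ must be imposed separately, which is exactly what separates your cases (2) and (3). Your route is more conceptual: it unifies the three bullets of case (2) as the single condition $H(P)=0$, explains the square-root expressions of the second and third bullets as one intrinsic equation $ae^2+cd^2=0$ read through the bijective Frobenius, identifies $P$ as the strange point of the smooth conic, and makes Remark \ref{conicpj} immediate, whereas the paper's computation hands you explicit factorizations. Your approach also surfaces a nuance the statement glosses over: when $b=0$ the kernel point $P=(e:d:0)$ lies at infinity, so the two distinct lines of the last two bullets are affinely parallel, meeting only at infinity --- consistent with the paper's own normal form $aX^2+dX+f$, which factors into two parallel lines. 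One cosmetic quibble: with $P=(e:d:b)$ one has $H(P)=b^2\bigl(f+\frac1b\bigl(de+\frac{ae^2}{b}+\frac{cd^2}{b}\bigr)\bigr)$, so your displayed value of $H(P)$ presumes the normalization $P=(e/b:d/b:1)$; this affects nothing, since only the vanishing of $H(P)$ is used.
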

\begin{proof}
If $b \neq 0$, then we can take $\alpha \in K$ such that $a\alpha ^2+b\alpha +c = 0$. Therefore
$$
h(x,y)=bXY+F,
$$
where $X=x+\alpha y+\frac{e+d\alpha}{b}$, $Y=y+\frac{a}{b}(x+\alpha y)+\frac{1}{b}(d+\frac{a}{b}(e+d\alpha))$, $F=f+\frac{1}{b}(e+d\alpha)(d+\frac{a}{b}(e+d\alpha))=f+\frac{1}{b}(de+\frac{ae^2}{b}+\frac{cd^2}{b})$. Hence $C$ is non-degenerate if and only if $F \neq 0$; when $F=0$, $C$ becomes two intersecting lines.

If $b=0$, we may assume that $a\neq 0$ without loss of generality. Then
$$
h(x,y)=aX^2+dX+Ey+f,
$$
where $X=x+\sqrt{\frac{c}{a}}y$, $E=e+d\sqrt{\frac{c}{a}}$. Here if $E=0$, then $C$ degenerates as two lines; in particular, the two lines become a double line if $d$ is furthermore $0$. If $E \neq 0$, then
$$
h(x,y)=aX_1^2+EY,
$$
where $X_1=X+\sqrt{\frac{f}{a}}$, and $Y=y+\frac{d}{E}X$. Thus $C$ is non-degenerate under this assumption.
\end{proof}
\begin{rem}\label{conicpj}
The proposition above can also be applied to determine the classification of conics in $\mathbb{P}^2_K$ with defining equation $h(X,Y,Z)=aX^2+bXY+cY^2+dXZ+eYZ+fZ^2$, except when $a=b=c=0$. When $h(X,Y,Z)=dXZ+eYZ+fZ^2$, it is obvious that the conic is degenerate as two projective lines, and the two lines become one only if $d=e=0$. 
\end{rem}
By Proposition \ref{conics} and Remark \ref{conicpj}, we can check that a projective conic in characteristic $2$ is isomorphic to $\PP^1$ (a double line or a non-degenerate conic) or $\PP^1 \vee \PP^1$ (two different lines), and that will be helpful when we compute Euler number of the resolution by considering the exceptional divisors.

\section{Proof of the main result}
By Proposition \ref{A4equation}, in characteristic 2, quotient singularity $\mathbb{A}_{K}^{4}/A_4$ by permutation is isomorphic to the 4-dimensional hypersurface $M=V(f)\subseteq \AAA^5_K$, where
\begin{align*}
  f = E^2+(A^2D+ABC+C^2)E+A^4D^2+A^3C^3+A^2B^3D+B^3C^2+C^4.  
\end{align*}
It is possible to obtain $\mathrm{Sing}(M)$ by direct computation using the defining equation, but here we choose to use another way to compute it, in order to apply the idea that comes from the construction of crepant resolution of $\mathbb{C}^3/H_{168}$ by Markushevich \cite{markushevich1997resolution}, where $H_{168}$ is a simple subgroup of $\mathrm{SL}(3,\mathbb{C})$ of order 168. 

By considering the permutation action of $A_4$ on $\mathbb{A}^4_K$, there are 3 planes fixed by elements of order 2, and 4 planes fixed by elements of order 3. By the quotient map, these 2 families of fixed planes give the singular locus of quotient variety as union of their images. To obtain the defining equation of singular locus, it suffices to consider the image of representative plane from each family. For the family of planes fixed by an element of order 2, we can take $\{x_1=x_2,x_3=x_4\}$ as a representative. Then by checking its parametrised form $(x_1,x_2,x_3,x_4)=(t_1,t_1,t_2,t_2)$ and using the formula for $(A,B,C,D,E)$ in the construction of hypersurface, we obtain parametrisation of one singular plane in the quotient variety:
\begin{align*}
 P_1=\{A=0,B=t_1^2+t_2^2,C=0,D=t_1^2t_2^2,E=0\}
=V(A,C,E).   
\end{align*}
By similar procedure for the plane $\{x_1=x_2=x_3\}$ fixed by an element of order 3, the other singular plane is written as:
\begin{align*}
P_2 =& 
\{A=t_1+t_2,B=t_1^2+t_1t_2,C=t_1^3+t_1^2t_2,D=t_1^3t_2,\\&E=t_1^6+t_1^5t_2+t_1^4t_2^2+t_1^3t_2^3\}\\
=&V(B^2+AC,ABC+A^2D+C^2,E+A^2D+C^2).  
\end{align*}
Note that to show the second equality, one should prove double inclusion between two sets. It is easy to see that $P_2$ is contained in the variety given by 3 defining equations. Conversely, given a point in $V(B^2+AC,ABC+A^2D+C^2,
E+A^2D+C^2)$, one can take $t_1=\frac{B}{A}$ ($A\neq 0$) or $\sqrt[4]{D}$ ($A=0$), and $t_2=A+t_1$, to certify that the point is exactly in $P_2$ parametrised by $(t_1,t_2)$.

Here we obtain $\mathrm{Sing}(M) = P_1 \cup P_2$. According to Markushevich's construction in characteristic 0, one may hope that the singularity would be resolved via a series of blow-ups as following: we first compute the blow-up of $M$ along $P_1$, and then repeatedly compute the blow-up along the singular part of exceptional divisor of previous blow-up, until the whole singular locus becomes exactly the strict transform of $P_2$. Then $\widetilde{P_2}$ has singularities from the action of non-modular elements of order $3$, hence 
the final blow-up along the whole singular locus can give the resolution. The following claim tells that this procedure does give a crepant resolution of the quotient singularity. 

\begin{claim}
Let $\pi_1:U \to M$ be the blow-up of $M$ along $P_1$ with exceptional divisor $E_1$, $\pi_2:V \to U$ be the blow-up of $U$ along $E_1 \cap \mathrm{Sing}(U)$ with exceptional divisor $E_2$, $\pi_3:W \to V$ be the blow-up of $V$ along $E_2 \cap \mathrm{Sing}(V)$ with exceptional divisor $E_3$, and $\pi_4:R \to W$ be the blow-up of $W$ along $\mathrm{Sing}(W)$ with exceptional divisor $E_4$. Then $\pi:=\pi_1 \circ \pi_2 \circ \pi_3 \circ \pi_4:R \to M$ is a crepant resolution of $M$. 
\end{claim}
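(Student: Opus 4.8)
The plan is to prove the Claim by carrying out the four blow-ups explicitly in affine charts, tracking the singular locus after each step, and checking at every stage that the chosen center satisfies the hypotheses of Lemma \ref{crepantcondition}. Crepancy and discrepancy are local conditions along the exceptional divisors, and every standard chart of a blow-up of a smooth variety along a smooth center is isomorphic to an affine space; hence the computation in the proof of Lemma \ref{crepantcondition} applies chart-by-chart even though the ambient after $\pi_1$ is no longer $\AAA^5$ but the smooth blow-up of it. Once each $\pi_i$ is shown to be crepant, composing gives $K_R = \pi_4^* K_W = \pi_4^*\pi_3^* K_V = \cdots = \pi^* K_M$, so $\pi$ is crepant; it then remains only to check that $R$ is smooth, i.e. that $\pi_4$ fully resolves $\mathrm{Sing}(W)$.

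First I would treat $\pi_1$. The center $P_1 = V(A,C,E)$ is a linear subspace, hence smooth, irreducible, of codimension $3$ in $\AAA^5$. Collecting the lowest-order terms of $f$ in the variables $A,C,E$ (with $B,D$ treated as parameters) gives the quadric $E^2 + A^2B^3D + B^3C^2$, so $M$ has multiplicity exactly $2$ along $P_1$, and Lemma \ref{crepantcondition} makes $\pi_1$ crepant. Writing $U$ in the three charts of the blow-up (for instance $A=A$, $C=AC'$, $E=AE'$ and the two analogues), I would compute the strict transform of $f$ together with its Jacobian to determine $\mathrm{Sing}(U)$, and in particular to isolate the locus $E_1 \cap \mathrm{Sing}(U)$ that serves as the next center.

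I would then repeat this pattern for $\pi_2$ and $\pi_3$: in each case verify that the new center is smooth, irreducible, of codimension $3$ in the ambient smooth fourfold's embedding fivefold, and that the current hypersurface has multiplicity $2$ along it, so Lemma \ref{crepantcondition} again yields crepancy; then recompute the singular locus in the charts. The aim of this chain is to show that after $\pi_3$ the remaining singular locus $\mathrm{Sing}(W)$ is precisely the strict transform of $P_2$, which is a smooth surface carrying only the tame, non-modular quotient singularity arising from the order-$3$ elements of $A_4$. The final blow-up $\pi_4$ along $\mathrm{Sing}(W)$, once more of codimension $3$ with multiplicity $2$, then produces a smooth $R$.

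The main obstacle is the bookkeeping through the successive charts: each blow-up is covered by several affine patches, and one must glue the local descriptions to confirm that each center is globally smooth and irreducible rather than merely so in a single chart, and that the identification $\mathrm{Sing}(W) = \widetilde{P_2}$ holds. The most delicate point is likely the transition from the modular behaviour along $P_1$, resolved by the first three blow-ups, to the residual order-$3$ singularity along $\widetilde{P_2}$, together with confirming that $\pi_4$ leaves nothing singular. Equivalently, the crux is to verify that the codimension-$3$, multiplicity-$2$ pattern genuinely persists at every stage and terminates exactly after four blow-ups, so that the hypotheses of Lemma \ref{crepantcondition} are met throughout and $R$ is smooth.
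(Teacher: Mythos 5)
Your proposal is correct and follows essentially the same route as the paper: the paper's proof is exactly this chain of four explicit chart-by-chart blow-up computations, invoking Lemma \ref{crepantcondition} at each stage (with the same quadratic part $E^2+A^2B^3D+B^3C^2$ giving multiplicity $2$ along $P_1$), identifying $\mathrm{Sing}(W)$ with the strict transform of $P_2$, and verifying smoothness of $R$ in the final charts. Your remark that the lemma applies after $\pi_1$ because blow-up charts of affine space along linear centers are again affine spaces is the same (implicit) justification the paper uses.
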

\begin{proof}
To show that $\pi$ is a crepant resolution, it suffices to check that each blow-up is along a smooth locus of codimension 3 in the whole space, and that the hypersurface has multiplicity $2$ along the center of blow-up (such that each blow-up is a crepant morphism by Lemma \ref{crepantcondition}), and that $R$ is smooth. What is more, for each blow-up, we can use Proposition \ref{conics} to check the structure of exceptional divisor.

\textbf{Step 1:} $\pi_1:U \to M$.
Take projective coordinates $(u_0:u_1:u_2)=(A:C:E)$. Then
\begin{align*}
U=V(&u_2^2+(Du_0^2+Bu_0u_1+u_1^2)E+A^2D^2u_0^2+AC^3u_0^2+B^3Du_0^2\\&+B^3u_1^2+C^2u_1^2, (u_0:u_1:u_2)=(A:C:E)).
\end{align*}
Here $E_1=V(u_2^2+B^3Du_0^2+B^3u_1^2).$ By the base change given by the Frobenius $\mathrm{Spec}(K[\sqrt{B},\sqrt{D}])\to \mathrm{Spec}(K[B,D])$, we have $u_2^2+B^3Du_0^2+B^3u_1^2=(u_2+\sqrt{B^3D}u_0+\sqrt{B^3}u_1)^2$, such that $E_1$ can be viewed as a trivial $\mathbb{P}^1$-bundle over $\mathbb{A}^2$ after the base change. Since the Frobenius is a universal homeomorphism, this base change does not change Euler numbers. By abuse of notation, we write $E_1\cong \mathbb{A}^2 \times \mathbb{P}^1$ under the necessary base change. In the defining equation of $U$, $u_0=u_1=0$ implies $u_2=0$, which gives a contradiction, so $U$ is covered by its affine pieces determined by $u_0 \neq 0$ and $u_1 \neq 0$ respectively. We denote them by $U_0=U\cap \{u_0 \neq 0\}$ and $U_1=U\cap \{u_1 \neq 0\}$. For these 2 affine pieces, we have
\begin{align*}
U_0 \cong V(&u_2^2+(D+Bu_1+u_1^2)Au_2+A^2D^2+A^4u_1^3+B^3D+B^3u_1^2+A^2u_1^4),\\
\mathrm{Sing}(U_0)&\cap E_1 =V(A,B,u_2).\\
U_1 \cong V(&u_2^2+(Du_0^2+Bu_0+1)Cu_2+C^2D^2u_0^4+C^4u_0^3+B^3Du_0^2+B^3+C^2),\\
\mathrm{Sing}(U_1)&\cap E_1 =V(C,B,u_2).
\end{align*}
And we also obtain $E_1 \setminus \mathrm{Sing}(U) \cong \mathbb{A}^2 \times \mathbb{P}^1 \setminus \mathbb{A}^1 \times \mathbb{P}^1$ by gluing its affine pieces together.

\textbf{Step 2:} $\pi_2:V \to U$.
From step 1, it suffices to consider blow-ups of $U_0$ along $V(A,B,u_2)$ and $U_1$ along $V(C,B,u_2)$, and then glue them together to obtain $V$. Denote the 2 blow-ups by $\widetilde{U_0}$ and $\widetilde{U_1}$. Then
\begin{align*}
\widetilde{U_0} = V(&v_2^2+(D+Bu_1+u_1^2)v_0v_2+D^2v_0^2+A^2u_1^3v_0^2+BDv_1^2+Bu_1^2v_1^2\\&+u_1^4v_0^2,(v_0:v_1:v_2)=(A:B:u_2)),\\
\widetilde{U_1} = V(&v_2^2+(Du_0^2+Bu_0+1)v_0v_2+D^2u_0^4v_0^2+C^2u_0^3v_0^2+BDu_0^2v_1^2+Bv_1^2\\&+v_0^2,(v_0:v_1:v_2)=(C:B:u_2)).
\end{align*}
Here $E_2=V(v_2^2+(Du_0^2+u_1^2)v_0v_2+(Du_0^2+u_1^2)^2v_0^2)$ gives two intersecting projective lines if $Du_0^2+u_1^2 \neq 0$ or a double line $\{v_2^2=0\}$ if $Du_0^2+u_1^2 = 0$. Similarly to what is done in step 1, we may write 
$$
E_2 \cong (\mathbb{A}^1 \times \mathbb{P}^1 \setminus \mathbb{A}^1)\times (\mathbb{P}^1 \vee \mathbb{P}^1) \cup \mathbb{A}^1 \times \mathbb{P}^1.
$$
For $\widetilde{U_1}$, computation shows that $\mathrm{Sing}(\widetilde{U_1})\cap E_2 \subseteq V\cap \{u_0 \neq 0\} \subseteq \widetilde{U_0}$, thus we only need to consider $\widetilde{U_0}$. Denote by $V_0$ and $V_1$ the affine pieces of $\widetilde{U_0}$ determined by $v_0 \neq 0$ and $v_1 \neq 0$ respectively, and then $\widetilde{U_0} = V_0 \cup V_1$. Again like what is done in step 1, we obtain
\begin{align*}
V_0 \cong V(&v_2^2+(D+Au_1v_1+u_1^2)v_2+D^2+A^2u_1^3+ADv_1^3+Au_1^2v_1^3+u_1^4),\\
\mathrm{Sing}(V_0)&\cap E_2 =V(A,v_2,D+u_1^2).\\
V_1 \cong V(&v_2^2+(D+Bu_1+u_1^2)v_0v_2+D^2v_0^2+B^2u_1^3v_0^4+BD+Bu_1^2+u_1^4v_0^2),\\
\mathrm{Sing}(V_1)&\cap E_2 =V(B,v_2,D+u_1^2).
\end{align*}
It follows that the part $\mathbb{A}^1 \times \mathbb{P}^1$ in $E_2$ is exactly singular part, thus $E_2 \setminus \mathrm{Sing}(V)\cong (\mathbb{A}^1 \times \mathbb{P}^1 \setminus \mathbb{A}^1)\times (\mathbb{P}^1 \vee \mathbb{P}^1).$

\textbf{Step 3:} $\pi_3:W \to V$.
With similar notations as above, taking $(w_0:w_1:w_2)$ as projective coordinates for $(A:v_2:D+u_1^2)$ in $\widetilde{V_0}$ and $(B:v_2:D+u_1^2)$ in $\widetilde{V_1}$, we have
\begin{align*}
\widetilde{V_0} = V(&w_1^2+(w_2+u_1v_1w_0)w_1+w_2^2+v_1^3w_0w_2+u_1^3w_0^2,\\&(w_0:w_1:w_2)=(A:v_2:D+u_1^2)),\\
\widetilde{V_1} = V(&w_1^2+(w_2+u_1w_0)v_0w_1+v_0^2w_2^2+u_1^3v_0^4w_0^2+w_0w_2,\\&(w_0:w_1:w_2)=(B:v_2:D+u_1^2)).
\end{align*}
Consider the conics with coordinates $(w_0:w_1:w_2)$ in $E_3$. Computation shows that they are degenerate as 2 different projective lines exactly when $v_0 \neq 0$ and $u_1v_0^2=v_1^2$. Therefore,
$$
E_3 \cong (\mathbb{A}^1 \times \mathbb{P}^1 \setminus \mathbb{A}^1)\times \mathbb{P}^1  \cup \mathbb{A}^1 \times (\mathbb{P}^1\vee \mathbb{P}^1).
$$
For $\widetilde{V_1}$, again by computation we obtain $\mathrm{Sing}(\widetilde{V_1})\cap E_3 \subseteq W\cap \{v_0 \neq 0\} \subseteq \widetilde{V_0}$, so it suffices to consider $\widetilde{V_0}$. Using similar notations as step 1 and step 2, we consider the affine cover $\widetilde{V_0}=W_0 \cup W_2$ as follows.
\begin{align*}
&W_0 \cong V(w_1^2+w_1w_2+u_1v_1w_1+w_2^2+v_1^3w_2+u_1^3),\\
&\mathrm{Sing}(W_0) =\{w_1=w_2=v_1^3,u_1=v_1^2\}.\\
&W_2 \cong V(w_1^2+(1+u_1v_1w_0)w_1+1+v_1^3w_0+u_1^3w_0^2),\\
&\mathrm{Sing}(W_2)\subseteq \mathrm{Sing}(W)\cap \{w_0\neq 0\} \subseteq \mathrm{Sing}(W_0).
\end{align*}
Therefore $\mathrm{Sing}(W)=\mathrm{Sing}(W_0)$ is exactly the strict transform of $P_2$ under these 3 blow-ups above, and we only need to consider $W_0$ for the next step. As for $E_3 \setminus \mathrm{Sing}(W)$, since singularities in $E_3$ only appear at each crossing point of projective lines, we obtain
\begin{align*}
&E_3 \setminus \mathrm{Sing}(W) \\
\cong &(\mathbb{A}^1 \times \mathbb{P}^1 \setminus \mathbb{A}^1)\times \mathbb{P}^1  \cup \mathbb{A}^1  \times (\mathbb{P}^1\vee \mathbb{P}^1 \setminus \{1 point\}).
\end{align*}

\textbf{Step 4:} $\pi_4:R \to W$.
Let $w_1'=w_1+v_1^3,w_2'=w_2+v_1^3,u_1'=u_1+v_1^2$, and use the projective coordinates $(r_0:r_1:r_2)=(w_1':w_2':u_1')$. Then
\begin{align*}
\widetilde{W_0}=V(&r_0^2+r_1^2+r_0r_1+u_1'r_2^2+v_1r_0r_2+v_1^2r_2^2,\\&(r_0:r_1:r_2)=(w_1':w_2':u_1')).
\end{align*}
For the exceptional divisor, $E_4=V(r_0^2+r_0(r_1+v_1r_2)+(r_1+v_1r_2)^2)\cong \mathbb{A}^2 \times (\mathbb{P}^1 \vee \mathbb{P}^1)$(note that $A$ is a hidden affine coordinate in the defining equation of $\widetilde{W_0}$). For the smoothness of $\widetilde{W_0}$, by considering its affine pieces $\widetilde{W_0}=R_0\cup R_2$, we obtain
\begin{align*}
&R_0 \cong V(1+r_1^2+r_1+w_1'r_2^3+v_1r_2+v_1^2r_2^2),\\
&R_2 \cong V(r_0^2+r_1^2+r_0r_1+u_1'+v_1r_0+v^2).
\end{align*}
And it is easy to check that both $R_0$ and $R_2$ are smooth.

Above all, we know that $R$ is smooth and $\pi$ is composed of crepant blow-up morphisms, hence $\pi:R\to M$ is a crepant resolution of $M$. 
\end{proof}
For the calculation of Euler number,
\begin{align*}
\chi(R)=&\chi(E_4)+\chi(E_3\setminus \mathrm{Sing}(W))\\
&+\chi(E_2\setminus \mathrm{Sing}(V))+\chi(E_1\setminus \mathrm{Sing}(U))\\
&+\chi(M\setminus \mathrm{Sing}(M))\\
=&3+(2+2)+3+0+0=10.
\end{align*}

Then we finish the proof.

\begin{rem}
In characteristic other than $2$, $\mathbb{A}_{K}^{4}/A_4$ has a crepant resolution of Euler number $4$, which equals the number of conjugacy classes of $A_4$. Therefore, an analogue of Batyrev's theorem in characteristic 0 holds for $\mathbb{A}_{K}^{4}/A_4$ in any odd characteristic. For characteristic other than $2,3$, they are non-modular cases. For characteristic $3$, note that $A_4$ is unique up to conjugation as a subgroup of $\SL(4,K)$, and then see \cite{yamamoto2021crepant}, Theorem 1.2.
\end{rem}
\begin{rem}
From the perspective of motivic integral, we have $[R]=\mathbb{L}^4+6\mathbb{L}^3+3\mathbb{L}^2$, which is totally different from characteristic 0 case, where $[\widetilde{\mathbb{C}^4/A_4}]=\mathbb{L}^4+3\mathbb{L}^3$, and the coefficients are explained as numbers of conjugacy classes of age $0$ or $1$ in $A_4$.
\end{rem}
\begin{rem}
Let $H$ be the normal Sylow 2-subgroup of $A_4$ containing all elements of order $2$. Then $A_4=H \rtimes C_3$, and the quotient singularity $\AAA^4/A_4 \cong (\AAA^4/H)/C_3$. In characteristic $0$, crepant resolution of $\CC^4/A_4$ can be obtained by first taking $Y$, a $C_3$-equivariant crepant resolution of $\CC^4/H$, and then taking a crepant resolution of $Y/C_3$. Therefore, $\CC^4/H$ is regarded as an easier singularity than $\CC^4/A_4$. However, in characteristic $2$, we cannot use similar approach to obtain crepant resolution of $\AAA^4/H$: that is, $\AAA^4/H$ is not easier than $\AAA^4/A_4$ in a sense. This is again a special phenomenon of quotient singularities in positive characteristic.
\end{rem}

\bibliography{ref}
\bibliographystyle{amsplain}

\end{document}